\def\timestamp{%
Time-stamp: <RF-unctble.tex: vrijdag 24-03-2023 at 20:27:57 (cet)>}
\def\stripname Time-stamp: <#1: #2 #3 at #4 #5>{#3/#4 (#1)}
\edef\filedate{\expandafter\stripname\timestamp}
\documentclass[a4paper]{amsart}

\DeclareMathSymbol\le \mathrel{AMSa}{"36}
\DeclareMathSymbol\ge \mathrel{AMSa}{"3E}
\newcommand\orpr[2]{\langle{#1},{#2}\rangle}
\newcommand\card[1]{\mathopen|{#1}\mathclose|}
\newcommand\bcard[1]{\bigl|{#1}\bigr|}
\newcommand\cl{\overline}
\newcommand\tkp{(2^\kappa)^+}
\newcommand\RF{<_\mathrm{RF}}
\newcommand\RFe{\le_\mathrm{RF}}
\newcommand\dom{\operatorname{dom}}

\newcommand\cee{\mathfrak{c}}
\newcommand\gothA{\mathfrak{A}}
\newcommand\gothU{\mathfrak{U}}

\newcommand\calF{\mathcal{F}}
\newcommand\calG{\mathcal{G}}

\newtheorem{lemma}{Lemma}[section]

\usepackage{amsrefs}

\begin{document}
\title[Long chains in the Rudin-Frol\'ik order]%
  {Long chains in the Rudin-Frol\'ik order for uncountable cardinals}
\author[K. P. Hart]{Klaas Pieter Hart}
\address{Faculty EEMCS\\TU Delft\\
         Postbus 5031\\2600~GA {} Delft\\the Netherlands}
\email{k.p.hart@tudelft.nl}
\urladdr{https://fa.ewi.tudelft.nl/\~{}hart}

\begin{abstract}
We point out that a construction by Butkovi\v{c}ov\'a of a chain
of length~$\cee^+$ in the Rudin-Frol\'ik order on~$\beta\omega$
can easily be adapted to produce, given an uncountable cardinal~$\kappa$,
a chain of length~$\tkp$ in the Rudin-Frol\'ik order on~$\beta\kappa$.
\end{abstract}

\subjclass{Primary 03E05; Secondary: 54D80}
\keywords{Rudin-Frol\'ik order, chain, independent matrix, uniform ultrafilter}

\date{\filedate}

\maketitle

\section*{Introduction}

The Rudin-Frol\'ik order~$\RFe$ of ultrafilters was defined by Frol\'ik 
in~\cite{MR203676} and used to prove the non-homogeneity 
of~$\beta\omega_0\setminus\omega_0$.
The order is tree-like on $\beta\omega_0\setminus\omega_0$;
the predecessors of an element are linearly ordered and every point has
$2^\cee$ successors.

Many natural questions about this order have been answered for
$\beta\omega_0\setminus\omega_0$.
For example, every point has at most $\cee$~many predecessors, 
hence a chain can have cardinality at most~$\cee^+$ and in~\cite{MR730151} 
Butkovi\v{c}ov\'a constructed a chain of that maximum possible order type.

The definition of $\RFe$ was give by Frol\'ik for ultrafilters on arbitrary 
sets and its stand to reason that one asks whether the results for
ultrafilters on~$\omega_0$ can be generalized to uncountable cardinals.

It is the purpose of this note to point out that Butkovi\v{c}ov\'a's 
construction can be used almost verbatim to produce in $\beta\kappa$, 
where $\kappa$~is uncountable, an $\RFe$-chain of length~$(2^\kappa)^+$.

\section{Preliminaries}

\subsection{The Rudin-Frol\'ik order}

Let $\kappa$ be an infinite cardinal.
An indexed set $\{u_\alpha:\alpha\in\kappa\}$ is said to be $\kappa$-discrete
if there is a partition $\{U_\alpha:\alpha\in\kappa\}$ into sets of 
cardinality~$\kappa$ such that $U_\alpha\in u_\alpha$ for all~$\alpha$.

In case $\kappa=\omega_0$ this notion coincides with relative discreteness
of the set in the \v{C}ech-Stone compactification $\beta\omega_0$ because
in Hausdorff spaces one can separate the elements of a countable relatively
discrete set can be separated by pairwise disjoint open sets.

If $X=\{u_\alpha:\alpha\in\kappa\}$ is $\kappa$-discrete then its closure 
in~$\beta\kappa$ is homeomorphic to~$\beta\kappa$; 
the map $f:\alpha\to\beta\kappa$ given by $f(\alpha)=u_\alpha$ induces
a homeomorphism between $\beta\kappa$ and the closure 
of $\{u_\alpha:\alpha\in\kappa\}$.
If $u\in\beta\kappa$ then, following Frol\'ik, one usually denotes 
$\beta f(u)$ by $\sum(X,u)$.
 
The Rudin-Frol\'ik order~$\RFe$ on $\beta\kappa$ is defined by $u\RFe v$ iff
there is a $\kappa$-discrete set~$X$ such that $v=\sum(X,u)$.
The proof in~\cite{MR273581} that $\RFe$~is a partial order of the types
of ultrafilters on~$\omega_0$ is readily addapted to other cardinals.

\subsection{Stratified sets of filters}

The main tool in the construction is that of a stratified set of filters.

Let $\kappa$ be an infinite cardinal.
A stratified set of filters on~$\kappa$ is an $\alpha\times\kappa$-matrix
$\langle \calF_{\beta,\eta}:\orpr\beta\eta\in\alpha\times\kappa\rangle$
of filters on~$\kappa$, where $\alpha$~is an ordinal, such that
\begin{itemize}
\item there is a choice $F_{\beta,\eta}\in\calF_{\beta,\eta}$ of elements
      such that for every $\beta\in\alpha$ the family
      $\{F_{\beta,\eta}:\eta\in\kappa\}$ is pairwise disjoint, and
\item if $\beta<\delta<\alpha$, $\eta\in\kappa$ and $F\in\calF_{\beta,\eta}$
      then $\{\zeta\in\kappa:F\in\calF_{\delta,\zeta}\}$ has cardinality~$\kappa$. 
\end{itemize}

Note that the filters in a stratified set are uniform, except possibly
for those in the last row, if $\alpha$~is a successor.
For if $F\in\calF_{\beta,\eta}$ and $\beta+1<\alpha$ then $F$~intersects
$\kappa$~many of the pairwise disjoint sets $F_{\beta+1,\zeta}$, so it has
cardinality~$\kappa$.

In our construction it will always be the case that if $\alpha=\beta+1$
the filters~$\calF_{\beta,\zeta}$ are all uniform. 

\subsection{Independent matrix}\label{subsec.matrix}

A secondary tool in the construction is that of an independent matrix of sets,
that is, a matrix
$\langle A_{\xi,\eta}:\orpr\xi\eta\in2^\kappa\times\kappa\rangle$
of subsets of~$\kappa$ such that
\begin{enumerate}
\item for each $\xi$ the family $\{A_{\xi,\eta}:\eta\in\kappa\}$
      is a partition of~$\kappa$ into sets of cardinality~$\kappa$, and
\item for each finite function~$p\subset2^\kappa\times\kappa$ the intersection
      $\bigcap_{\xi\in\dom p}A_{\xi,p(\xi)}$ has cardinality~$\kappa$.
\end{enumerate}
For a construction of such a family see~\cite{MR196693}*{Theorem~3}.


\section{The main result}

As in~\cite{MR730151} we construct a triangular array of $\kappa$-discrete
families of ultrafilters.
That is, an array
$$
\bigl<\gothU_{\alpha,\beta}:\beta<\alpha<\tkp\bigr>
$$
where each $\gothU_{\alpha,\beta}$ is an indexed $\kappa$-discrete 
family $\langle u(\alpha,\beta,\eta):\eta\in\kappa\rangle$
of ultrafilters on~$\kappa$. 

The demands on this array are that for every $\alpha<\tkp$
\begin{enumerate}
\item the row $\langle\gothU_{\alpha,\beta}:\beta<\alpha\rangle$
      is a stratified family of ultrafilters, and\label{cond.1}
\item if $\beta<\delta<\alpha$ and $\eta\in\kappa$ then\label{cond.2}
      $$
      u(\alpha,\beta,\eta)=
         \sum\bigl(\gothU_{\alpha,\delta},u(\delta,\beta,\eta)\bigr)
      $$ 
\end{enumerate}

This will yield very many chains of length~$\tkp$ in the Rudin-Frol\'ik
order on~$\beta\kappa$.
Indeed, for each fixed pair $\orpr\beta\eta$ condition~\eqref{cond.2} above
shows that $u(\delta,\beta,\eta)\RF u(\alpha,\beta,\eta)$ whenever
$\beta<\delta<\alpha$ so that the sequence
$$
\bigl<u(\alpha,\beta,\eta):\beta<\alpha<\tkp\bigr>
$$
is $\RF$-increasing.

\subsection*{The construction}

We construct our array recursively, one row at a time.
At the same time we construct an array 
$$
\bigl<\gothA_{\alpha,\beta}:\beta<\alpha<\tkp\bigr>
$$
of partitions, where 
$\gothA_{\alpha,\beta}=\{U(\alpha,\beta,\eta):\eta\in\kappa\}$ is a row
in the independent matrix 
$\langle A_{\xi,\eta}:\orpr\xi\eta\in2^\kappa\times\kappa\rangle$
from subsection~\ref{subsec.matrix}.
We will always have $U(\alpha,\beta,\eta)\in u(\alpha,\beta,\eta)$,
so $\gothA_{\alpha,\beta}$ witnesses the $\kappa$-discreteness 
of~$\gothU_{\alpha,\beta}$.

To begin the construction we take the first row $\{A_{0,\eta}:\eta\in\kappa\}$
from our matrix, so $U(1,0,\eta)=A_{0,\eta}$, and we choose, 
for each $\eta$, a uniform ultrafilter $u(1,0,\eta)$ such that 
$U(1,0,\eta)\in u(1,0,\eta)$.
Then the set $\gothU_{1,0}=\{u(1,0,\eta):\eta\in\kappa\}$ is $\kappa$-discrete,
and this one-element row is stratified, vacuously.

\bigskip
Now assume that $\alpha\in\tkp$ is given and that we have an array
$$
\langle\gothU_{\beta,\gamma}:\gamma<\beta<\alpha\rangle
$$
that meets requirements~\eqref{cond.1} and~\eqref{cond.2} up to~$\alpha$.
So each row $\langle\gothU_{\beta,\gamma}:\gamma<\beta\rangle$ is stratified
and we have
$$
u(\beta,\gamma,\eta)=
         \sum\bigl(\gothU_{\beta,\delta},u(\delta,\gamma,\eta)\bigr)
$$ 
whenever $\gamma<\delta<\beta$ and $\eta\in\kappa$.

\smallskip
We construct the $\alpha$th row.

Since $\alpha<\tkp$ we can take an injective map $i:\alpha\to2^\kappa$
and make an independent matrix
$\langle U(\alpha,\beta,\eta):\orpr\beta\eta\in\alpha\times\kappa\rangle$ 
by setting $U(\alpha,\beta,\eta)=A_{i(\xi),\eta}$ for all~$\xi$ and~$\eta$.
This then also gives us the partitions~$\gothA_{\alpha,\beta}$ 
for~$\beta<\alpha$.

\bigskip
Using this matrix, and the ultrafilters constructed thus far, we construct
a stratified family
$$
\langle\calF_{\beta,\eta}:\orpr\beta\eta\in\alpha\times\kappa\rangle
$$
of filters, as follows.

Let $\orpr\beta\eta\in\alpha\times\kappa$.
We let $\calF_{\beta,\eta}$ be the filter generated by the union of the 
following families:
\begin{enumerate}
\item the Fr\'echet filter 
      $\{F\subseteq\kappa:\card{\kappa\setminus X}<\kappa\}$,
\item for $\gamma<\beta$ the singleton set $\{U(\alpha,\gamma,\zeta)\}$, 
      where $\zeta$~is such that $\eta\in U(\beta,\gamma,\zeta)$,
\item the singleton set $\{U(\alpha,\beta,\eta)\}$,
\item for $\gamma\in(\beta,\alpha)$ the family
      $\bigl\{\bigcup_{\zeta\in U}U(\alpha,\gamma,\zeta):
           U\in u(\gamma,\beta,\eta)\bigr\}$
\end{enumerate}
Using the fact that 
$\bigl<U(\alpha,\gamma,\zeta):\orpr\gamma\zeta\in\alpha\times\kappa\bigr>$
is an independent matrix one readily checks that the union of the families
given above has the finite intersection property and that
$\calF_{\beta,\eta}$~is indeed a uniform filter.
It remains to show that the resulting family is stratified.

For this we use that the formulas above are used at every stage of 
the construction and hence that (1)--(4) hold for all triples 
$\gamma<\beta<\delta$ of ordinals below~$\alpha$.

\medbreak
So let $\orpr\beta\eta$ and $\delta\in(\beta,\alpha)$ be given.
We calculate for every element~$G$ of the generating family 
of~$\calF_{\beta,\eta}$ the set $X_G=\{\zeta:G\in\calF_{\delta,\zeta}\}$
and show that it belongs to~$u(\delta,\beta,\eta)$.
This will show that $X_G\in u(\delta,\beta,\eta)$ for all~$G$ 
in~$\calF_{\beta,\eta}$, and hence that all these sets have cardinality~$\kappa$.

\begin{enumerate}
\item If $G$ belongs to the Fr\'echet filter then $X_G=\kappa$ because 
      all filters are uniform.

\item If $G=U(\alpha,\gamma,\xi)$, with $\gamma<\beta$, then 
      $G\in\calF_{\delta,\zeta}$ iff 
      $\zeta\in U(\delta,\gamma,\xi)$ and so
      $X_G=U(\delta,\gamma,\xi)$.
      But since $U(\alpha,\gamma,\xi)\in\calF_{\beta,\eta}$ we also have
      $\eta\in U(\beta,\gamma,\xi)$, which means that during the construction 
      of row~$\delta$ we ensured via~(2) 
      that 
      $U(\delta,\gamma,\xi)\in u(\delta,\beta,\eta)$, that is, 
      $X_G\in u(\delta,\beta,\eta)$.

\item If $G=U(\alpha,\beta,\eta)$ then $G\in\calF_{\delta,\zeta}$ iff
      $\zeta\in U(\delta,\beta,\eta)$, 
      so $X_G=U(\delta,\beta,\eta)$, and $X_G\in u(\delta,\beta,\eta)$.

\item If $G=\bigcup_{\xi\in U}U(\alpha,\gamma,\xi)$ with $\beta<\gamma<\delta$
      and $U\in u(\gamma,\beta,\eta)$,  
      then $G\in\calF_{\delta,\zeta}$ iff $\zeta\in U(\delta,\gamma,\xi)$ for
      some~$\xi$ in~$U$, and so $X_G=\bigcup_{\xi\in U}U(\delta,\gamma,\xi)$
      and so
      $X_G\in\sum\bigl(\gothU_{\delta,\gamma},u(\gamma,\beta,\eta)\bigr)
           =u(\delta,\beta,\eta)$.
\item If $G=\bigcup_{\xi\in U}U(\alpha,\delta,\xi)$ 
      for some $U\in u(\delta,\beta,\eta)$
      then $X_G=U$ because if $\zeta\in\kappa$ then 
      $G\in\calF_{\delta,\zeta}$ iff 
      $G\supseteq U(\alpha,\delta,\zeta)$ iff $\zeta\in U$.
\item If $G=\bigcup_{\xi\in U}U(\alpha,\gamma,\xi)$ with $\delta<\gamma$
      and $U\in u(\gamma,\beta,\eta)$ then, because  
      $u(\gamma,\beta,\eta)=
        \sum\bigl(\gothU_{\gamma,\delta},u(\delta,\beta,\eta)\bigr)$,
      we have $\{\zeta:U\in u(\gamma,\delta,\zeta)\}\in u(\delta,\beta,\eta)$.
      But the definition of the~$\calF_{\delta,\zeta}$ then implies
      that $X_G=\{\zeta:U\in u(\gamma,\delta,\zeta)\}$ and 
      so $X_G\in u(\delta,\beta,\eta)$.
\end{enumerate}

In the next section we will show how to find a stratified set
$\langle u(\alpha,\beta,\eta):\orpr\beta\eta\in\alpha\times\kappa\rangle$
of ultrafilters such that $\calF_{\beta,\eta}\subseteq u(\alpha,\beta,\eta)$
for all~$\orpr\beta\eta$.

We now show that that such a stratified set will satisfy
$$
      u(\alpha,\beta,\eta)=
         \sum\bigl(\gothU_{\alpha,\delta},u(\delta,\beta,\eta)\bigr)
$$ 
whenever $\beta<\delta<\alpha$ and $\eta\in\kappa$.

Let $V\in u(\alpha,\beta,\eta)$ and $I=\{\zeta:V\in u(\alpha,\delta,\zeta)\}$.
We must show that $I\in u(\delta,\beta,\eta)$.

Let $U\in u(\delta,\beta,\eta)$, 
then $W=\bigcup_{\xi\in U} U(\alpha,\delta,\xi)$ belongs to~$\calF_{\beta,\eta}$
and so $V\cap W\in u(\delta,\beta,\eta)$, which then implies
that $J=\{\zeta:V\cap W\in u(\alpha,\delta,\zeta)\}$ has cardinality~$\kappa$.
But $J\subseteq I\cap U$, so that $I\cap U$ has cardinality~$\kappa$.
As $U$~was arbitrary this shows that $I\in u(\delta,\beta,\eta)$.

\section{From stratified sets of filters to stratified sets of ultrafilters}

We are given a stratified set 
$\langle \calF_{\beta,\eta}:\orpr\beta\eta\in\alpha\times\kappa\rangle$
of filters and we must construct a stratified set 
$\langle u_{\beta,\eta}:\orpr\beta\eta\in\alpha\times\kappa\rangle$
of ultrafilters such that $\calF_{\beta,\eta}\subseteq u_{\beta,\eta}$
for all~$\orpr\beta\eta$.

The following lemma gives us the successor step in the construction below.

\begin{lemma}\label{lemma.refining}
Let
$\langle \calF_{\beta,\eta}:\orpr\beta\eta\in\alpha\times\kappa\rangle$
be a stratified set of filters on~$\kappa$ and let $X\subseteq\kappa$.
Then there is a stratified set
$\langle\calG_{\beta,\eta}:\orpr\beta\eta\in\alpha\times\kappa\rangle$
of filters such that for all $\orpr\beta\eta$ we have
$\calF_{\beta,\eta}\subseteq\calG_{\beta,\eta}$, and
$X\in\calG_{\beta,\eta}$ or $\kappa\setminus X\in\calG_{\beta,\eta}$.
\end{lemma}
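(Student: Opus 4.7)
The plan is to decide, cell by cell, whether to add $X$ or $\kappa\setminus X$ to $\calF_{\beta,\eta}$, and to let $\calG_{\beta,\eta}$ be the resulting filter. To organize these choices coherently I introduce, by recursion downward over $\beta<\alpha$, a notion of goodness: call $(\beta,\eta)$ \emph{X-good} if $\kappa\setminus X\notin\calF_{\beta,\eta}$ and, for every $\delta\in(\beta,\alpha)$ and every $F\in\calF_{\beta,\eta}$, the set $\{\zeta\in\kappa:F\in\calF_{\delta,\zeta} \text{ and } (\delta,\zeta) \text{ is X-good}\}$ has cardinality~$\kappa$; define Y-good symmetrically, with $\kappa\setminus X$ in place of~$X$.

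The first step is to prove, by downward induction on~$\beta$, that every cell is X-good or Y-good. If $X\in\calF_{\beta,\eta}$, then $F\cap X\in\calF_{\beta,\eta}$ for every $F\in\calF_{\beta,\eta}$, so stratification of~$\calF$ supplies, for each $\delta\in(\beta,\alpha)$, $\kappa$~many $\zeta$ with $F\cap X\in\calF_{\delta,\zeta}$; each such $(\delta,\zeta)$ contains~$X$ and is therefore not Y-good, so the inductive hypothesis forces it to be X-good, making $(\beta,\eta)$ X-good. The case $\kappa\setminus X\in\calF_{\beta,\eta}$ is symmetric. In the remaining \emph{undecided} case I argue by contradiction: suppose witnesses $(\delta_1,F_1)$ and $(\delta_2,F_2)$ show that X-good and Y-good fail respectively, so $\{\zeta:F_1\in\calF_{\delta_1,\zeta} \text{ and } (\delta_1,\zeta) \text{ is X-good}\}$ and $\{\zeta:F_2\in\calF_{\delta_2,\zeta} \text{ and } (\delta_2,\zeta) \text{ is Y-good}\}$ both have cardinality less than~$\kappa$. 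Since $F_1\cap F_2\in\calF_{\beta,\eta}$, applying stratification at $\delta_0=\min\{\delta_1,\delta_2\}$ yields $\kappa$~many $\zeta$ with $F_1\cap F_2\in\calF_{\delta_0,\zeta}$. If $\delta_1=\delta_2$, the X-good and Y-good parts of this set are individually small by the failure assumptions, contradicting regularity of~$\kappa$. If $\delta_1\neq\delta_2$, regularity forces one class to dominate; the failure assumption at~$\delta_0$ pins down the dominant class as the opposite of the failing type at~$\delta_0$, and feeding a dominant cell back into the recursive definition of its own goodness, applied to the other factor and the other level, produces a $\kappa$-sized set of exactly the type assumed small at the other level, once more a contradiction.

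Once the dichotomy is in place, set $d(\beta,\eta)=X$ when $(\beta,\eta)$ is X-good and $d(\beta,\eta)=\kappa\setminus X$ otherwise, and let $\calG_{\beta,\eta}$ be the filter generated by $\calF_{\beta,\eta}\cup\{d(\beta,\eta)\}$. The original disjoint witnesses $F_{\beta,\eta}$ still lie in $\calG_{\beta,\eta}$, so only the chain condition for~$\calG$ needs verification. Any new element of $\calG_{\beta,\eta}$ contains some $F\cap d(\beta,\eta)$ with $F\in\calF_{\beta,\eta}$, and the goodness of $(\beta,\eta)$ supplies $\kappa$~many $\zeta$ with $F\in\calF_{\delta,\zeta}$ and $d(\delta,\zeta)=d(\beta,\eta)$; for each such $\zeta$ both $F$ and $d(\beta,\eta)$ lie in $\calG_{\delta,\zeta}$, so $F\cap d(\beta,\eta)$, and hence any superset of it in $\calG_{\beta,\eta}$, belongs to $\calG_{\delta,\zeta}$.

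The main obstacle is the undecided step of the induction, especially the case $\delta_1\neq\delta_2$: there the two failure conditions ostensibly concern unrelated data, and it is only the self-referential definition of goodness, combined with two-level stratification of~$\calF$, that allows one to confront them and extract a contradiction.
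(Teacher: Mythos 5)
Your closing step is fine: \emph{if} one has a classification of the cells into an ``add $X$'' class and an ``add $\kappa\setminus X$'' class which is self-supporting (each cell avoids the opposite set and, for every $F$ in its filter and every higher level, sees $\kappa$~many cells of its own class whose filters contain~$F$), then generating $\calG_{\beta,\eta}$ accordingly gives the required stratified refinement; this matches the final step of the paper's proof. The gap is in how you produce the classification. You define X-good/Y-good ``by recursion downward over $\beta<\alpha$'' and prove the dichotomy ``by downward induction on~$\beta$'', but the reverse order on an ordinal $\alpha$ is well-founded only when $\alpha$ is finite, and here $\alpha$ is an arbitrary (in the application, very large) ordinal: goodness at level $\beta$ is made to depend on goodness at \emph{all} levels $\delta>\beta$, so for $\alpha\ge\omega$ the definition has no base case and is circular, and the induction establishing ``every cell is X-good or Y-good'' has no well-ordering to run on. This is not cosmetic: your undecided-case argument needs precisely the dichotomy at the higher level $\delta_0$ (to know that the $\kappa$~many $\zeta$ with $F_1\cap F_2\in\calF_{\delta_0,\zeta}$ are exhausted by X-good and Y-good cells; without it they could simply be neither, and no contradiction ensues), so it presupposes what it is trying to prove; and the $\delta_1\neq\delta_2$ subcase (``feeding a dominant cell back into the recursive definition\dots'') is a sketch, not an argument. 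Incidentally, regularity of $\kappa$ is nowhere assumed in the lemma, though that is a side issue compared with the ill-founded recursion.

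Your notion can be made precise as a greatest fixed point (X-good $=$ the largest set of cells each avoiding $\kappa\setminus X$ and having $\kappa$~many continuations inside the set at every higher level through every filter element), and then the desired dichotomy is true --- but proving it is exactly the content of the lemma, and the paper's mechanism for it is what your proposal lacks: start from $C=\{\orpr\beta\eta:X\in\calF_{\beta,\eta}\}$ and close it \emph{upward} by a least-fixed-point iteration of length at most $\kappa^+$, adding a cell when some $F$ in its filter has all but fewer than $\kappa$ of its continuations at some higher level already inside. Induction on the closure rank --- a genuinely well-founded induction --- then shows that cells in $\cl{C}$ can absorb $X$ while still seeing $\kappa$~many $\cl{C}$-cells at every higher level, and that cells outside $\cl{C}$ see $\kappa$~many cells outside $\cl{C}$; these two classes play the role of your X-good and Y-good cells. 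As it stands, the central combinatorial step of your proof is missing.
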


\begin{proof}
We start by deciding to which filters we add~$X$ and to which we add its 
complement.

We let $C=\{\orpr\beta\eta:X\in\calF_{\beta,\eta}\}$ and close it off in a
certain way:
define $C_0=C$, and, recursively,
$C_\xi=\bigcup_{\delta<\xi}C_\delta$ when $\xi$~is a limit, and
at successor stages
$$
C_{\xi+1}=C_\xi\cup
\bigl\{\orpr\beta\eta:
(\exists\varepsilon>\beta)
(\exists F\in\calF_{\beta,\eta})
\bigl(
\bcard{\{\orpr\varepsilon\zeta:F\in\calF_{\varepsilon,\zeta}\}\setminus C_\xi}
     <\kappa
\bigr)
\bigr\}\eqno(\dag)
$$  
This process stops at $\kappa^+$ (or earlier) because $C_{\kappa^++1}=C_{\kappa^+}$.
To see this note that if $\orpr\beta\eta\in C_{\kappa^++1}$,
as witnessed by~$\varepsilon>\beta$ and~$F\in\calF_{\beta,\eta}$, then the
intersection  
$\{\orpr\varepsilon\zeta:F\in\calF_{\varepsilon,\zeta}\}\cap C_{\kappa^+}$
is of cardinality~$\kappa$ and hence already a subset of~$C_\xi$ for some
$\xi<\kappa^+$, which then shows that $\orpr\beta\eta$ is already 
in~$C_{\xi+1}$.
We call $C_{\kappa^+}$ the closure of~$C$ and will denote it by~$\cl{C}$.

\smallskip
If $\orpr\beta\eta\in\cl{C}$ then $G\cap X$~has cardinality~$\kappa$
whenever $G\in\calF_{\beta,\eta}$.
We prove this by induction on~$\xi$.

If $\orpr\beta\eta\in C_0$ then $X\in\calF_{\beta,\eta}$ and we are done.

Going from $\xi$ to~$\xi+1$ 
let $\orpr\beta\eta\in C_{\xi+1}\setminus C_\xi$ and take
$\varepsilon>\beta$ and $F\in\calF_{\beta,\eta}$ such that
$\{\orpr\varepsilon\zeta:F\in\calF_{\varepsilon,\zeta}\}\setminus C_\xi$
has cardinality less than~$\kappa$.
Now if $G\in\calF_{\beta,\eta}$ then 
$\{\orpr\varepsilon\zeta:G\cap F\in\calF{\varepsilon,\zeta}\}$ is a subset
of $\{\orpr\varepsilon\zeta:F\in\calF_{\varepsilon,\zeta}\}$ of 
cardinality~$\kappa$; therefore its intersection~$I$ with~$C_\xi$ has 
cardinality~$\kappa$.
But then, by the inductive assumption, $G\cap F\cap F_{\varepsilon,\zeta}\cap X$
has cardinality~$\kappa$ for all $\orpr\varepsilon\zeta\in I$,
hence certainly $G\cap X$ has cardinality~$\kappa$.

\smallskip
If $\orpr\beta\eta\notin\cl{C}$ then, in particular $\orpr\beta\eta\notin C$,
hence $X\notin\calF_{\beta,\eta}$ and so every member of~$\calF_{\beta,\eta}$
intersects~$\kappa\setminus X$.

Because the ultrafilters are assumed to be uniform this implies that 
$F\cap(\kappa\setminus X)$ has cardinality~$\kappa$ for all~$F$ 
in~$\calF_{\beta,\eta}$.

\smallskip
We let $\calG_{\beta,\eta}$ be the filter generated 
by $\calF_{\beta,\eta}\cup\{X\}$ if $\orpr\beta\eta\in\cl{C}$ and
by~$\calF_{\beta,\eta}\cup\{\kappa\setminus X\}$ otherwise.

\smallskip
We need to show that
$\langle \calG_{\beta,\eta}:\orpr\beta\eta\in\alpha\times\kappa\rangle$
is stratified.

If $\orpr\beta\eta\notin\cl{C}$ then for every $F\in\calF_{\beta,\eta}$
and every $\gamma>\beta$ the intersection
$\{\orpr\gamma\zeta: F\in\calF_{\gamma,\zeta}\}\setminus\cl{C}$
has cardinality~$\kappa$, and so
$\{\orpr\gamma\zeta: F\cap(\kappa\setminus X)\in\calG_{\gamma,\zeta}\}$
has cardinality~$\kappa$ as well.

Next let $\orpr\beta\eta\in\cl{C}$.
We show that for every~$G\in\calF_{\beta,\eta}$ and every $\gamma>\beta$
the set $I_{G,\gamma}=\{\orpr\gamma\zeta\in\cl{C}:G\in\calF_{\gamma,\zeta}\}$ has 
cardinality~$\kappa$. 

Once this is established we see that for $\orpr\gamma\zeta\in G_{F,\gamma}$
we have $F,X\in\calG_{\gamma,\zeta}$ and so hence $F\cap X\in\calG_{\gamma,\zeta}$.
And since $\calG_{\beta,\eta}$ is generated by 
$\{F\cap X:F\in\calF_{\beta,\eta}\}$ this shows that the family is stratified
at each element of~$\cl{C}$.

We prove the statement induction on~$\xi$ that the statement holds for 
every~$C_\xi$.

If $\orpr\beta\eta\in C_0$ then $X\in\calF_{\beta,\eta}$ and so for every 
$G\in\calF_{\beta,\eta}$ and every $\gamma>\beta$ the set 
$\{\orpr\gamma\zeta:G\cap X\in\calF_{\gamma,\zeta}\}$ has cardinality~$\kappa$
and is contained in~$C_0$. 

Going from $\xi$ to $\xi+1$ let $\orpr\beta\eta\in C_{\xi+1}$ as witnessed
by $\varepsilon>\beta$ and $F\in\calF_{\beta,\eta}$.
Let $I=\{\orpr\varepsilon\zeta:F\in\calF_{\varepsilon,\zeta}\}$, 
then $I\setminus C_\xi$ has cardinality less than~$\kappa$.

Now let $G\in\calF_{\beta,\eta}$, then 
$\{\orpr\varepsilon\zeta\in I:G\in\calF_{\varepsilon,\zeta}\}$ has cardinality,
hence so does $I_{G,\varepsilon}\cap C_\xi$.

By the inductive assumption we find that for $\gamma>\varepsilon$ the
set $\{\orpr\gamma\zeta\in C_\xi:G\in\calF_{\gamma,\zeta}\}$ has 
cardinality~$\kappa$.

By the definition of $C_{\xi+1}$ we find that for $\gamma$ in the interval
$(\beta,\varepsilon)$ the set 
$\{\orpr\gamma\zeta:G\cap F\in\calF_{\gamma,\zeta}\}$
is a subset of~$C_{\xi+1}$, as witnessed by~$F$ and $\varepsilon$.
\end{proof}

To create the stratified set of ultrafilters we enumerate the power
set of~$\kappa$ as $\langle X_\nu:\nu\in2^\kappa\rangle$ and recursively
build matrices
$\langle\calF^\nu_{\beta,\eta}:\orpr\beta\eta\in\alpha\times\kappa\rangle$,
where
$\langle\calF^0_{\beta,\eta}:\orpr\beta\eta\in\alpha\times\kappa\rangle$
is the given stratified set,
each time 
$\langle\calF^{\nu+1}_{\beta,\eta}:\orpr\beta\eta\in\alpha\times\kappa\rangle$
is obtained by applying Lemma~\ref{lemma.refining} to
$\langle\calF^\nu_{\beta,\eta}:\orpr\beta\eta\in\alpha\times\kappa\rangle$
and~$X_\nu$,
and at limit stages $\calF^\nu_{\beta,\eta}=\bigcup_{\mu<\nu}\calF^\mu_{\beta,\eta}$
for all~$\orpr\beta\eta$.

Then we can let $u_{\beta,\eta}=\calF^{2^\kappa}_{\beta,\eta}$ 
for all~$\orpr\beta\eta$.

\begin{bibdiv}
\begin{biblist}

\bib{MR730151}{article}{
   author={Butkovi\v{c}ov\'{a}, Eva},
   title={Long chains in Rudin-Frol\'{\i}k order},
   journal={Comment. Math. Univ. Carolin.},
   volume={24},
   date={1983},
   number={3},
   pages={563--570},
   issn={0010-2628},
   review={\MR{730151}},
}

\bib{MR1039321}{book}{
   author={Engelking, Ryszard},
   title={General topology},
   series={Sigma Series in Pure Mathematics},
   volume={6},
   edition={2},
   note={Translated from the Polish by the author},
   publisher={Heldermann Verlag, Berlin},
   date={1989},
   pages={viii+529},
   isbn={3-88538-006-4},
   review={\MR{1039321}},
}
	
\bib{MR196693}{article}{
   author={Engelking, R.},
   author={Kar\l owicz, M.},
   title={Some theorems of set theory and their topological consequences},
   journal={Fund. Math.},
   volume={57},
   date={1965},
   pages={275--285},
   issn={0016-2736},
   review={\MR{196693}},
   doi={10.4064/fm-57-3-275-285},
}

\bib{MR203676}{article}{
   author={Frol\'{\i}k, Zden\v{e}k},
   title={Sums of ultrafilters},
   journal={Bull. Amer. Math. Soc.},
   volume={73},
   date={1967},
   pages={87--91},
   issn={0002-9904},
   review={\MR{203676}},
   doi={10.1090/S0002-9904-1967-11653-7},
}

\bib{MR273581}{article}{
   author={Rudin, Mary Ellen},
   title={Partial orders on the types in $\beta N$},
   journal={Trans. Amer. Math. Soc.},
   volume={155},
   date={1971},
   pages={353--362},
   issn={0002-9947},
   review={\MR{273581}},
   doi={10.2307/1995690},
}

\end{biblist}
\end{bibdiv}

\end{document}